\theoremstyle{plain}
\newtheorem{thm}{Theorem}[section]
\newtheorem{cor}[thm]{Corollary}
\newtheorem{Example}{Example}[section]
\newtheorem{note}{Note}[section]
\theoremstyle{definition}
\newtheorem{defn}{Definition}[section]
\newtheorem{rem}{Remark}[section]
\begin{document}

\setcounter {page}{1}
%---------------Title,Author,Abstract-----------------------------------------------
\title{ A Generalized notion of convergence of sequences of subspaces in an
inner product space via ideals }

\author[ P. Malik, S. Das ]{Prasanta Malik* and Saikat Das*\ }
\newcommand{\acr}{\newline\indent}
\maketitle
\address{{*\,} Department of Mathematics, The University of Burdwan, Golapbag, Burdwan-713104,
West Bengal, India.
                Email: pmjupm@yahoo.co.in, dassaikatsayhi@gmail.com \acr
           }

\maketitle

\begin{abstract}
In this paper we introduce the notion of $\mathcal{I}$-convergence of sequences of $k$-dimensional
subspaces of an inner product space, where $\mathcal{I}$ is an ideal of subsets of $\mathbb{N}$,
the set of all natural numbers and $k\in\mathbb N$. We also study some basic properties of this notion.
\end{abstract}

\author{}
\maketitle { Key words and phrases : Ideal, filter, $\mathcal{I}$-convergence, inner product space, $n$-norm.} \\

\textbf {AMS subject classification (2020) : Primary 40A05, 40A35; Secondary 15A63, 46B20} .  \\

%-------------------------Section 1- Background and introduction-----------------------

\section{\textbf{Introduction and background:}}
The notion of convergence of sequences of points was extended to the notion of convergence of sequences of
sets by many authors \cite{ So, Wj1, Wj2}. Manuharawati et al. \cite{Ma1}, \cite{Ma2} introduced the
concepts of convergences of sequences of 1-dimensional and 2-dimensional
subspaces of a normed linear space and in \cite{Ma3} they have introduced the convergence notion of
sequences of $k$-dimensional ($k\in\mathbb{N}$) subspaces of an inner product space $\mathcal X$ of
dimension $k$ or higher ( may be infinite).

On the other hand the notion of convergence of sequences of real numbers was extended to the
notion of statistical convergence by Fast \cite{Fa} (and also independently by Schoenberg \cite{Sc} ) with the
concept of natural density. A lot of work have been done in this direction, after the work of
Salat \cite{Sl} and Fridy \cite{Fr1}. For more primary works in this field one can see  \cite{ Fr2, Ko1} etc.

The notion of statistical convergence of sequences of real numbers further extended to the notion
of $\mathcal{I}$-convergence by Kostyrko et al. \cite{Ko2}, with the notion of an ideal $\mathcal{I}$ of subsets
of the set of all natural numbers $\mathbb{N}$. For more works in this direction one can see \cite{ Dm, La}.

Recently in 2021, using the notion of natural density, F. Nuray \cite{Nu} has introduced and studied the notion
of statistical convergence of a sequence of $k$-dimensional ($k\in\mathbb{N}$) subspaces of an inner product
space. It seems therefore reasonable to introduce and study the notion of $\mathcal{I}$-convergence of a sequence
of $k$-dimensional subspaces of an inner product space. In this paper we do the same and investigate some basic
properties of this notion. Our results extend the results of Nuray \cite{Nu} and Manuharawati et al. \cite{Ma3}.

%---------------------------Section 2- Definitions and notations-------------------------------------

\section{\textbf{ Basic Definitions and Notations:}}

We first recall some basic definitions and notation related to inner product space from the literature. Throughout the
paper $\mathcal X$ stands for an inner product space $(\mathcal{X},<. , .>)$ of
dimension $k$ ($k\in\mathbb{N}$) or higher (may be infinite) over the field $\mathbb F~~(= \mathbb R~~ or ~~\mathbb C)$,
$U_{n}~~(n \in\mathbb{N})$, $V$ and $W$ denote $k$-dimensional subspaces of $\mathcal X$. Also throughout the paper, it
is supposed that $U_{n} = span \{u^{(n)}_{1}, u^{(n)}_{2},..., u^{(n)}_{k}\}~~~ (n \in\mathbb{N})$, $V = span \{v_{1}, v_{2},..., v_{k}\}$
and $W= span \{w_{1}, w_{2},..., w_{k}\}$, where $ \{ u^{(n)}_{1}, u^{(n)}_{2}, ..., u^{(n)}_{k} \} $ is an orthonormal basis of $U_{n}~~(n \in\mathbb{N})$ and
$\{v_{1}, v_{2},..., v_{k}\}$, $\{w_{1}, w_{2},..., w_{k}\}$ are respective orthonormal bases of $V$ and $W$. Also for
any $x\in \mathcal{X}, \left\|x\right\|= \sqrt{\left\langle x,x\right\rangle}$.

The orthogonal projection of a vector $u \in\mathcal{X}$ onto the subspace $V$ is denoted by $P_{V}(u)$ and is defined by
\begin{align*}
P_{V}(u) = \sum^{k}_{j=1}<u, v_{j}> v_{j}.
\end{align*}

The distance between a vector $ u \in \mathcal{X} $ and a subspace $V$ of $\mathcal{X}$ is denoted by $\widetilde{d} (u,V)$
and is defined by
\begin{align*}
  \widetilde{d} (u,V) =& \inf\{\left\|u-v\right\| : v\in V \} \\
      =& \left\|u-P_{V}(u)\right\|
\end{align*}
where $P_{V}(u)$ is the projection of the vector $u$ upon $V$.

Distance or gap between two subspaces $U$ and $V$ of $\mathcal{X}$ (see \cite{ Ka, Ma3}) is denoted by $\widetilde{d}(U,V)$ and is
defined by
\begin{align*}
\widetilde{d}(U,V)=&\sup \{~~ \inf \{\left\|u-v\right\|: v\in V\} : u\in U, \left\|u\right\|=1\}
\\=& \displaystyle{ \sup_{u\in U, \left\|u\right\|=1}}\left\|u-P_{V}(u)\right\| .
\end{align*}

Using the notion of distance between two subspaces, in (\cite{Ma3}) Manuharawati et al. introduced the notion of convergence
of a sequence of $k$-dimensional subspaces of an inner product space $(\mathcal{X},<. , .>)$.

\begin{defn}(\cite{Ma3}):
Let $\{U_{n}\}_{n\in \mathbb N}$ and $V$ be $k$-dimensional subspaces of $\mathcal X$. Then the sequence $\{U_{n}\}_{n\in\mathbb{N}}$ is
said to converge to the subspaces $V$, written as
$\displaystyle{ \lim_{n\rightarrow \infty} } U_{n}= V $, if
\begin{align*}
&\displaystyle{\lim_{n\rightarrow \infty} }\widetilde{d} (U_{n},V)= 0
\\ i.e.~~~ &\displaystyle{\lim_{n\rightarrow \infty}\sup_{u\in U_{n}, \left\|u\right\|=1}}\left\|u - P_{V}(u)\right\|= 0.
\end{align*}
\end{defn}

On the other hand the usual notion of convergence of real sequences was extended to the notion of statistical
convergence \cite{ Fa, Sc} using the concept of natural density.

\begin{defn}(\cite{Fr1}):
Let $\mathcal P$ be a subset of $\mathbb{N}$. The quotient
$d_{j}(\mathcal P)= \frac{\left|\mathcal P\cap
\{1,2,...,j\}\right|}{j} $ is called the $j^{th}$ partial density
of $\mathcal P$, for all $j\in \mathbb{N}$. Now the limit, $
d(\mathcal P)= \displaystyle{\lim_{j\rightarrow
\infty}}d_{j}(\mathcal P)$ (if it exists) is called the natural
density or simply density of $\mathcal P$.
\end{defn}

\begin{defn}(\cite{Fr1}):
A sequence $ \{x_{n}\}^{\infty}_{n=1} $ of real numbers is said to be statistically convergent to $x (\in\mathbb{R})$,
if for any $\epsilon > 0 $, $d(\mathcal A(\epsilon)) = 0$, where $\mathcal A(\epsilon) = \{m\in\mathbb{N}: \left|x_{m}-x\right|\geq \epsilon\}$.
\end{defn}

Further the notion of statistical convergence was extended to the notion of $\mathcal I$-convergence by
Kostyrko et al.(\cite{Ko2}) using the concept of an ideal $\mathcal I$ of subsets of $\mathbb N$.

\begin{defn}(\cite{Ko2}):
Let $\mathcal D$ be a non-empty set. A non-empty class $\mathcal I$ of subsets of $\mathcal D$ is said to be an
ideal in $\mathcal D$, provided $\mathcal I$ satisfies the conditions
  $(i)~~ \phi \in\mathcal I$, $(ii)$ if $\mathcal{A}, \mathcal{B} \in\mathcal I$ then $\mathcal{A}\cup \mathcal{B} \in\mathcal I$ and $(iii)$
    if $\mathcal{A}\in \mathcal I$ and $\mathcal{B}\subset \mathcal{A}$ then $\mathcal{B}\in \mathcal I$.
\end{defn}

An ideal $\mathcal I$ in $\mathcal{D}~~ (\neq \phi)$ is called non-trivial if $\mathcal {D} \notin \mathcal I$ and $\mathcal I \neq \{\phi\}$.

An ideal $\mathcal I$ in $\mathcal{D}~~ (\neq \phi)$ is called an admissible ideal if $\{z\} \in{\mathcal I},~~~\forall z \in\mathcal D$.

Throughout the paper we take $\mathcal I$ as a non-trivial admissible ideal in $\mathbb{N}$, unless otherwise mentioned.

\begin{defn}(\cite{Ko2}):
A non-empty class $\mathcal F$ of subsets of $\mathcal D~~(\neq \phi)$ is said to be a filter
in $\mathcal D$, provided $(i)~~  \phi \notin \mathcal F$, $(ii)$ if $\mathcal A, \mathcal B \in\mathcal F$
then $\mathcal A\cap \mathcal B \in\mathcal F$
and $(iii)$ if $\mathcal A\in\mathcal F$ and $\mathcal B$ is a subset of $\mathcal D$ such that $\mathcal B \supset \mathcal A$
then $\mathcal B \in\mathcal F$.
\end{defn}

Let $\mathcal I$ be a non-trivial ideal in $\mathcal D$. Then $\mathcal{F(I)}= \{ \mathcal{D-A} : \mathcal A \in\mathcal I \}$ forms
a filter on $\mathcal D$, called the filter associated with the ideal $\mathcal I$.

\begin{defn}(\cite{Ko2}):
A sequence $\{x_{n}\}^{\infty}_{n=1}$ of real numbers is said to be $\mathcal I$-convergent to $x \in \mathbb{R}$ if
$\forall~~~ \epsilon > 0$, the set $\mathcal A(\epsilon)= \{n \in\mathbb{N}: \left| x_{n}- x \right|\geq \epsilon\} \in\mathcal I$
or in other words for each $\epsilon > 0$, $\exists ~~~\mathcal B(\epsilon)\in \mathcal F(I)$ such that
$ \left|x_{n}- x\right| < \epsilon, \forall~~~ n \in\mathcal B(\epsilon)$.\\
In this case we write $\mathcal I-\displaystyle\lim_{n\rightarrow \infty}x_{n}= x$.
\end{defn}

In (\cite{Nu}) the concept of statistical convergence of sequences of subspaces of an inner product space was introduced by F. Nuray as follows:

\begin{defn}(\cite{Nu}):
A sequence $\{U_{n}\}_{n\in \mathbb N}$ of $k$-dimensional subspaces of $\mathcal X$, is said to converge
statistically to a $k$-dimensional subspace $V$ of $\mathcal X$ if for every $\epsilon > 0,~~ d(\mathcal A(\epsilon))= 0 $, where
\begin{align*}
\mathcal A(\epsilon) =\{n\in \mathbb N: \widetilde{d}(U_{n}, V)\geq \epsilon \}
=\{n\in \mathbb N:\displaystyle{\sup_{u\in U_{n}, \left\|u\right\|=1}}\left\|u- P_{V}(u)\right\|\geq \epsilon\}.
\end{align*}
\end{defn}
In this case we write $st-\displaystyle\lim_{n\rightarrow \infty} U_{n}= V$.

In this paper we extend the above concept of statistical convergence of sequences of subspaces
of an inner product space to $\mathcal I$-convergence and study some fundamental properties of
this notion. We also establish some equivalent condition of this convergence notion.

%------------------------Section 3- Main Results------------------------------------
\section{\textbf{ Main Results:}}

\begin{defn}:
A sequence $\{U_{n}\}_{n\in \mathbb N}$
of $k$-dimensional subspaces of $\mathcal X$ is said to be $\mathcal I$-convergent to a $k$-dimensional subspace $V$ if
$\forall~~ \epsilon > 0,~~ \mathcal A(\epsilon)= \{n\in \mathbb N: \displaystyle{\sup_{u\in U_{n}, \left\|u\right\|=1}}
\left\|u- P_{V}(u)\right\|\geq \epsilon\} \in \mathcal I$.
\\In other words $\{U_{n}\}_{n\in \mathbb N}$ is said to $\mathcal I$-converges to $V$ if $\forall~~ \epsilon > 0, \exists~~
\mathcal B(\epsilon)\in \mathcal F(\mathcal I)$ such that
\begin{align*}
\widetilde{d}(U_{n}, V)< \epsilon, \forall~~ n\in \mathcal B(\epsilon)
\\ i.e. \displaystyle{\sup_{u\in U_{n}, \left\|u\right\|=1}} \left\|u- P_{V}(u)\right\|< \epsilon, \forall~~ n\in \mathcal B(\epsilon).
\end{align*}
\end{defn}
In this case we write $\mathcal I-\displaystyle{\lim_{n\rightarrow \infty}} U_{n}= V$ and $V$ is called
an $\mathcal I$-limit of the sequence $\{U_{n}\}_{n\in \mathbb N}$.

\begin{thm}
$\mathcal{I}-\displaystyle\lim_{n\rightarrow \infty}U_{n}= V$ if and only if $\mathcal{I}-\displaystyle\lim
_{n\rightarrow \infty}\left\|u^{(n)}_{i}- P_{V}(u^{(n)}_{i})\right\|= 0, \forall~~~ i= 1, 2,...,k$.
\end{thm}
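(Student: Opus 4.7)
The plan is to use two straightforward bounds linking $\widetilde{d}(U_n,V)$ with the individual distances $\|u_i^{(n)}-P_V(u_i^{(n)})\|$ for the orthonormal basis vectors of $U_n$, and then translate these pointwise inequalities into the language of ideals via the filter $\mathcal{F}(\mathcal{I})$.

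For the necessity direction, I would observe that each $u_i^{(n)}$ is a unit vector in $U_n$, so by the very definition of $\widetilde{d}(U_n,V)$ as a supremum,
\begin{align*}
\|u_i^{(n)}-P_V(u_i^{(n)})\| \leq \sup_{u\in U_n, \|u\|=1}\|u-P_V(u)\| = \widetilde{d}(U_n,V).
\end{align*}
Hence, for every $\epsilon>0$ and every $i\in\{1,\dots,k\}$,
\begin{align*}
\{n\in\mathbb{N}:\|u_i^{(n)}-P_V(u_i^{(n)})\|\geq\epsilon\}\subseteq\{n\in\mathbb{N}:\widetilde{d}(U_n,V)\geq\epsilon\}.
\end{align*}
Since the right-hand set lies in $\mathcal{I}$ by hypothesis, the hereditary property of the ideal yields the conclusion.

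For the sufficiency direction, I would pick an arbitrary unit vector $u\in U_n$ and expand it in the orthonormal basis as $u=\sum_{i=1}^{k}\alpha_i u_i^{(n)}$ with $\sum_{i=1}^{k}|\alpha_i|^2=1$. Using linearity of the projection $P_V$, the triangle inequality, and then the Cauchy--Schwarz inequality, I would obtain
\begin{align*}
\|u-P_V(u)\| \leq \sum_{i=1}^{k}|\alpha_i|\,\|u_i^{(n)}-P_V(u_i^{(n)})\| \leq \Bigl(\sum_{i=1}^{k}\|u_i^{(n)}-P_V(u_i^{(n)})\|^{2}\Bigr)^{1/2}.
\end{align*}
Since the right-hand side is independent of $u$, taking the supremum gives $\widetilde{d}(U_n,V)\leq\bigl(\sum_{i=1}^{k}\|u_i^{(n)}-P_V(u_i^{(n)})\|^{2}\bigr)^{1/2}$.

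Now fix $\epsilon>0$ and, for each $i$, set $B_i=\{n\in\mathbb{N}:\|u_i^{(n)}-P_V(u_i^{(n)})\|<\epsilon/\sqrt{k}\}$. By hypothesis each $B_i\in\mathcal{F}(\mathcal{I})$, and since $\mathcal{F}(\mathcal{I})$ is closed under finite intersections, $B=\bigcap_{i=1}^{k}B_i\in\mathcal{F}(\mathcal{I})$. For every $n\in B$ the bound above gives $\widetilde{d}(U_n,V)<\sqrt{k\cdot\epsilon^2/k}=\epsilon$, which means $\{n\in\mathbb{N}:\widetilde{d}(U_n,V)\geq\epsilon\}\subseteq\mathbb{N}\setminus B\in\mathcal{I}$, so it belongs to $\mathcal{I}$.

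The only mildly delicate step is the Cauchy--Schwarz bound in the sufficiency direction, which is needed to absorb the unknown coefficients $\alpha_i$; the rest is a routine finite-union argument inside the ideal. I do not foresee a real obstacle, since the result is essentially the $\mathcal{I}$-analogue of the standard fact for ordinary and statistical convergence, and reduces to managing $k$ sets from $\mathcal{F}(\mathcal{I})$ simultaneously.
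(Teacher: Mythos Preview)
Your proof is correct and follows the same overall strategy as the paper: the forward direction uses that each $u_i^{(n)}$ is a unit vector in $U_n$, and the converse expands a unit vector in the orthonormal basis, applies linearity of $P_V$ and the triangle inequality, and then intersects finitely many filter sets.

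The one genuine difference is in how you control the coefficients in the converse direction. The paper writes $u=\sum_i c_i u_i^{(n)}$, sets $c=\sum_i|c_i|$, and then chooses $B_i(\epsilon)$ so that $\|u_i^{(n)}-P_V(u_i^{(n)})\|<\epsilon/(2c)$. This is slightly awkward, since $c$ depends on the particular unit vector $u$, while the sets $B_i(\epsilon)$ must be chosen independently of $u$; the argument only works once one observes (implicitly) that $c\le\sqrt{k}$ uniformly. Your use of Cauchy--Schwarz, giving
\[
\|u-P_V(u)\|\le\Bigl(\sum_{i=1}^k\|u_i^{(n)}-P_V(u_i^{(n)})\|^2\Bigr)^{1/2},
\]
produces a bound that is manifestly independent of $u$ from the outset and leads directly to the threshold $\epsilon/\sqrt{k}$. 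So your route is marginally cleaner, but the two arguments are otherwise the same.
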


\begin{proof} First, let $\mathcal{I}-\displaystyle\lim_{n\rightarrow \infty}U_{n}= V$  and let $\epsilon > 0$ be given.
Then $\exists~~ \mathcal A(\epsilon)\in \mathcal{F(I)}$ such that
\begin{align*}
d(U_{n},V)=\displaystyle\sup_{u\in U_{n}, \left\|u\right\|=1}\left\|u- P_{V}(u)\right\|< \epsilon ,\forall~~ n\in \mathcal A(\epsilon) ~~~~~~~~~~~~~~~~~\ldots
\ldots (1)
\end{align*}
\\Since for each $n\in \mathbb{N}$, $\{u^{(n)}_{1}, u^{(n)}_{2},...,u^{(n)}_{k}\}$ is an orthonormal
basis for $U_{n}$, so $\left\|u^{(n)}_{i}\right\|= 1$ for each $i=1,2,...,k$ and hence from (1) we have $\forall~~ n\in A(\epsilon)$,
\begin{align*}
&\left\|u^{(n)}_{i}- P_{V}(u^{(n)}_{i})\right\|\leq\displaystyle\sup_{u\in U_{n}, \left\|u\right\|=1}
\left\|u- P_{V}(u)\right\|< \epsilon, \forall~~~ i=1,2,...,k
\\ \Rightarrow &    \left\|u^{(n)}_{i}- P_{V}(u^{(n)}_{i})\right\|< \epsilon,   \forall~~~ i=1,2,...,k.
\end{align*}
Therefore, $\forall~~~ i=1,2,...,k$,
\begin{align*}
 & \mathcal A(\epsilon)\subset \{n\in \mathbb{N}: \left\|u^{(n)}_{i}- P_{V}(u^{(n)}_{i})\right\|< \epsilon \}
\\ \Rightarrow &~~ \{n\in \mathbb{N}: \left\|u^{(n)}_{i}- P_{V}(u^{(n)}_{i})\right\|< \epsilon \}\in \mathcal{F(\mathcal{I})}
\\ \Rightarrow &~~ \mathcal{I}-\displaystyle\lim_{n\rightarrow \infty}\left\|u^{(n)}_{i}- P_{V}(u^{(n)}_{i})\right\|= 0 .
\end{align*}
Conversely, let $\mathcal{I}-\displaystyle\lim_{n\rightarrow \infty}\left\|u^{(n)}_{i}- P_{V}(u^{(n)}_{i})\right\|= 0, \forall ~~~ i=1,2,...,k.$
\\Now for $u\in U_{n}= span\{u^{(n)}_{1},u^{(n)}_{2},...,u^{(n)}_{k}\}$ there exists unique scalars $c_{1},c_{2},...,c_{k}\in\mathbb{F}$
such that $ u = c_{1}u^{(n)}_{1}+ c_{2}u^{(n)}_{2}+...+ c_{k}u^{(n)}_{k}$.
So for $u\in U_{n}$ with $\left\|u\right\|= 1$ we have
\begin{align*}
\left\|u- P_{V}(u)\right\|=& \left\|\sum^{k}_{i=1}c_{i}u^{(n)}_{i}- P_{V}(\sum^{k}_{i=1}c_{i}u^{(n)}_{i})\right\|
\\=& \left\|\sum^{k}_{i=1}c_{i}u^{(n)}_{i}- \sum^{k}_{i=1}c_{i}P_{V}(u^{(n)}_{i})\right\|
\\=& \left\|\sum^{k}_{i=1}c_{i}\{u^{(n)}_{i}- P_{V}(u^{(n)}_{i})\}\right\|
\\\leq & \sum^{k}_{i=1}\left|c_{i}\right|\left\|u^{(n)}_{i}- P_{V}(u^{(n)}_{i})\right\|  ~~~~~~~~~~~~~~~~~~~~~~~~~~\ldots \ldots     (2)
\end{align*}
As $\left\|u\right\|=1$, therefore $\displaystyle{\sum^{k}_{i=1}}\left|c_{i}\right|= c > 0$.
\\Let $\epsilon > 0$ be given. Then $\forall~~ i=1,2,...,k$, there exists $B_{i}(\epsilon)\in \mathcal{F(\mathcal{I})}$
such that
\begin{align*}
\left\|u^{(n)}_{i}- P_{V}(u^{(n)}_{i})\right\|< \frac{\epsilon}{2c}, ~~~\forall~~ n\in B_{i}(\epsilon).
\end{align*}
Let $B(\epsilon)= \displaystyle{\bigcap^{k}_{i=1}} B_{i}(\epsilon)$. Then $B(\epsilon)\in \mathcal{F}(\mathcal{I})$. Let $n\in B(\epsilon)$. Then
\begin{align*}
\left\|u^{(n)}_{i}- P_{V}(u^{(n)}_{i})\right\|< \frac{\epsilon}{2c},~~ \forall~~ i=1,2,...,k.
\end{align*}
Then from (2) for $u\in U_{n}$ with $\left\|u\right\|=1$ we have,
\begin{align*}
 \left\|u- P_{V}(u)\right\|&\leq \sum^{k}_{i=1}\left|c_{i}\right|\left\|u^{(n)}_{i}- P_{V}(u^{(n)}_{i})\right\|
\\& < \frac{\epsilon}{2c}. \sum^{k}_{i=1}\left|c_{i}\right|=\frac{\epsilon}{2}
\end{align*}
\begin{align*}
 \Rightarrow  \displaystyle\sup_{u\in U_{n}, \left\|u\right\|=1}\left\|u- P_{V}(u)\right\|\leq \frac{\epsilon}{2}< \epsilon .
\end{align*}
Therefore, $B(\epsilon)\subset \{n\in \mathbb{N}:\displaystyle{\sup_{u\in U_{n},\left\|u\right\|=1}}\left\|u- P_{V}(u)\right\|< \epsilon\}$. Since
$\mathcal B(\epsilon)\in \mathcal F(I)$, so
$\{n\in \mathbb{N}:\displaystyle{\sup_{u\in U_{n},\left\|u\right\|=1}}\left\|u- P_{V}(u)\right\|< \epsilon\} \in \mathcal{F(I)}$. This
implies, $\mathcal{I}-\displaystyle{\lim_{n\rightarrow \infty} \sup_{u\in U_{n},\left\|u\right\|=1}} \left\|u- P_{V}(u)\right\|=0$
i.e. $\mathcal{I}-\displaystyle{\lim_{n\rightarrow \infty}} U_{n}= V$.
\end{proof}

\begin{rem} (i) If $\mathcal{I}$ is an admissible ideal, then the usual convergence of a sequence of $k$-dimensional subspaces
$\{U_{n}\}_{n\in \mathbb N}$ of an inner product space $\mathcal X$, implies the $\mathcal{I}$-convergence of
$\{U_{n}\}_{n\in \mathbb N}$ in $\mathcal X$.

(ii) If we take $\mathcal{I}=\mathcal{I}_{f}=\{A\subset \mathbb{N}$: A is a finite subset of $\mathbb{N}\}$, then
$\mathcal{I}_{f}$-convergence of a sequence $\{U_{n}\}_{n\in \mathbb N}$ of $k$-dimensional subspaces of an inner product
space $\mathcal{X}$ coincides with the usual notion of convergence of sequence $\{U_{n}\}_{n\in \mathbb N}$ of $k$-dimensional
subspaces (\cite{Ma3}).

(iii) Again if we take $\mathcal{I}=\mathcal{I}_{d}=\{A\subset \mathbb{N}: d(A)=0\}$, then $\mathcal{I}_{d}$-convergence
of sequence $\{U_{n}\}_{n\in \mathbb N}$ of $k$-dimensional subspaces of an inner product space $\mathcal X$, coincides
with the notion of statistical convergence
of $\{U_{n}\}_{n\in \mathbb N}$ (\cite{Nu}).
\end{rem}

We now site an example of a sequence of subspaces of an inner product space which is $\mathcal{I}$-convergent
but neither convergent in usual sense nor statistically convergent.

\begin{Example} For each $j\in \mathbb{N}$, let $\mathcal{D}_{j}=\{2^{j-1}(2s-1): s\in \mathbb{N}\}$. Then
$\mathbb{N}= \displaystyle{\bigcup^{\infty}_{j=1}} \mathcal{D}_{j}$, is a decomposition of $\mathbb{N}$, such that $\mathcal{D}_{j}$'s are infinite subsets
of $\mathbb{N}$ and $\mathcal{D}_{i}\cap \mathcal{D}_{j}=\phi$ for $i\neq j$.

Let $\mathcal{I}=\{ A\subset \mathbb{N}$ : A intersects only finitely many $\mathcal{D}_{j}'s\}$. Then $\mathcal{I}$ is
a non-trivial admissible ideal in $\mathbb{N}$. Now we consider the real inner product space $\mathbb{R}^{3}$ with the standard inner product
and let $\{e_{1},e_{2},e_{3}\}$ be the canonical basis of $\mathbb{R}^{3}$.\\
Then $\mathcal{D}_{1}=\{2s-1: s\in \mathbb{N}\}\in \mathcal{I}$
and $\mathcal{B}=\mathbb N- \mathcal{D}_{1}\in
\mathcal{F(\mathcal{I})}$. We considered the sequence
$\{U_{n}\}_{n\in \mathbb N}$ of 1-dimensional subspaces of
$\mathbb{R}^{3}$ defined as follows: $$ U_{n} =
span\{u^{(n)}_{1}=(\frac{1}{n}\sin(n)e_{1}+\frac{1}{n}e_{2})/
\alpha \}, \mbox{ if $n\in \mathcal{B}$  } and  U_{n} =
span\{u^{(n)}_{1}=e_{3}\}, \mbox{  if  $n\in \mathcal{D}_{1}$} $$,
where, $\alpha= \left\|\frac{1}{n}\sin(n)e_{1}+\frac{1}{n}e_{2}\right\|$. Let $V= span\{e_{2}\}$.\\
Let $n\in \mathcal{B}$.
Then the sequence $\{U_{n}\}_{n\in \mathbb N}$ is $\mathcal{I}$-convergent to $V$ but not statistically
convergent to $V$ and hence not convergent in usual sense.
\end{Example}

Thus we can say that $\mathcal{I}$-convergence of sequence of subspaces of an inner product space is a natural generalization
of statistical convergence of sequence of subspaces, which is also a generalization of usual notion of convergence of sequences of subspaces.

%\begin{note} Let $U_{n}~~(n \in\mathbb{N})$ and $V$ be $k$-dimensional subspaces of an inner product space $\mathcal X$. Then for each $i= 1,2,...,k$ we have
%$ \left\|u^{(n)}_{i}- P_{V}(u^{(n)}_{i})\right\|^{2} =
%1- \displaystyle{\sum^{k}_{j=1}}\left|\left\langle  u^{(n)}_{i},v_{j} \right\rangle\right|^{2}. $
%\end{note}

\begin{thm}
Let $U_{n}~~(n \in\mathbb{N})$ and $V$ be $k$-dimensional subspaces of an inner product space $\mathcal X$. Then
$\mathcal{I}-\displaystyle\lim_{n\rightarrow \infty}\left\|u^{(n)}_{i}- P_{V}(u^{(n)}_{i})\right\|= 0$
if and only if $~~\mathcal{I}-\displaystyle\lim_{n\rightarrow \infty}\sum^{k}_{j=1}\left|\left\langle  u^{(n)}_{i},v_{j} \right\rangle\right|^{2} = 1, ~~~\forall~~ i= 1,2,...,k.$
\end{thm}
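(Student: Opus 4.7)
The plan is to reduce the equivalence to a single algebraic identity linking the two quantities appearing in the statement, then transport the $\mathcal I$-convergence through that identity using the continuity of $t\mapsto t^2$.

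First I would record the key Pythagorean identity. Fix $i\in\{1,\dots,k\}$ and $n\in\mathbb N$. Because $\{v_1,\dots,v_k\}$ is an orthonormal basis of $V$, the projection is $P_V(u^{(n)}_i)=\sum_{j=1}^k\langle u^{(n)}_i,v_j\rangle v_j$, so by orthonormality
\begin{align*}
\bigl\|P_V(u^{(n)}_i)\bigr\|^2=\sum_{j=1}^k\bigl|\langle u^{(n)}_i,v_j\rangle\bigr|^2.
\end{align*}
Since $u^{(n)}_i-P_V(u^{(n)}_i)$ is orthogonal to $V$ and hence to $P_V(u^{(n)}_i)$, Pythagoras together with $\|u^{(n)}_i\|=1$ gives
\begin{align*}
\bigl\|u^{(n)}_i-P_V(u^{(n)}_i)\bigr\|^2=1-\sum_{j=1}^k\bigl|\langle u^{(n)}_i,v_j\rangle\bigr|^2.
\end{align*}
This single identity contains the whole content of the theorem; the remaining work is to transfer $\mathcal I$-convergence across the squaring map.

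For the forward direction, assume $\mathcal I\text{-}\lim_n\|u^{(n)}_i-P_V(u^{(n)}_i)\|=0$ and let $\delta>0$. Using the identity, the set $\{n:|1-\sum_j|\langle u^{(n)}_i,v_j\rangle|^2|\geq\delta\}$ coincides with $\{n:\|u^{(n)}_i-P_V(u^{(n)}_i)\|\geq\sqrt{\delta}\}$, which lies in $\mathcal I$ by hypothesis. Thus $\mathcal I\text{-}\lim_n\sum_j|\langle u^{(n)}_i,v_j\rangle|^2=1$. For the converse, assume $\mathcal I\text{-}\lim_n\sum_j|\langle u^{(n)}_i,v_j\rangle|^2=1$ and let $\epsilon>0$. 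The identity yields
\begin{align*}
\{n:\|u^{(n)}_i-P_V(u^{(n)}_i)\|\geq\epsilon\}=\Bigl\{n:\Bigl|1-\sum_{j=1}^k|\langle u^{(n)}_i,v_j\rangle|^2\Bigr|\geq\epsilon^2\Bigr\},
\end{align*}
which lies in $\mathcal I$; hence $\mathcal I\text{-}\lim_n\|u^{(n)}_i-P_V(u^{(n)}_i)\|=0$.

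Both directions need to be done for each fixed $i$, but since the statement is already quantified over $i$, no uniformity across $i$ is required and no obstacle appears there. There is really no hard step: the only thing to be careful about is that the Bessel-type sum $\sum_j|\langle u^{(n)}_i,v_j\rangle|^2$ is at most $1$ (again by Pythagoras), so $1-\sum_j|\langle u^{(n)}_i,v_j\rangle|^2$ is nonnegative and the squaring/square-rooting manipulation above is legitimate without absolute-value sign issues.
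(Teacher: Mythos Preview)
Your proof is correct. The paper omits the argument entirely (``Proof needs simple calculations, so is omitted''), and the Pythagorean identity $\|u^{(n)}_i-P_V(u^{(n)}_i)\|^2=1-\sum_{j=1}^k|\langle u^{(n)}_i,v_j\rangle|^2$ that you derive is exactly the ``simple calculation'' being referred to, so your approach is the intended one.
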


\begin{proof}
Proof needs simple calculations, so is omitted.
\end{proof}

\begin{cor}
Let $U_{n}~~(n \in\mathbb{N})$ and $V$ be $k$-dimensional subspaces of an inner product space $\mathcal X$. Then the following statements are equivalent:

(i) $\mathcal{I}-\displaystyle{\lim_{n\rightarrow \infty}}U_{n}=V$

(ii) $\mathcal{I}-\displaystyle{\lim_{n\rightarrow \infty}}\left\|u^{(n)}_{i}- P_{V}(u^{(n)}_{i})\right\|= 0,~~~  \forall~~
  i=1,2,...,k$

(iii)  $\mathcal{I}-\displaystyle{\lim_{n\rightarrow \infty}}\sum^{k}_{j=1}\left|\left\langle u^{(n)}_{i},v_{j}\right\rangle
\right|^{2}= 1, ~~~ \forall ~~ i=1,2,...,k. $
\end{cor}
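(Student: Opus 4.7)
The plan is to observe that the corollary is really just a concatenation of the two preceding results and requires essentially no new work. Theorem~3.1 already establishes the equivalence (i)~$\Leftrightarrow$~(ii), so the only thing to verify is (ii)~$\Leftrightarrow$~(iii), which is precisely the content of Theorem~3.2. By transitivity of the equivalence relation, (i)~$\Leftrightarrow$~(ii)~$\Leftrightarrow$~(iii), and the corollary follows.

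Since Theorem~3.2's proof was deferred as ``simple calculations,'' I would briefly note in the plan what makes (ii)~$\Leftrightarrow$~(iii) hold. The key identity is that, because $\{v_1,\dots,v_k\}$ is an orthonormal basis of $V$, expansion of the squared norm and use of orthonormality give
\begin{align*}
\bigl\|u^{(n)}_i - P_V(u^{(n)}_i)\bigr\|^2 = \|u^{(n)}_i\|^2 - \bigl\|P_V(u^{(n)}_i)\bigr\|^2 = 1 - \sum_{j=1}^{k}\bigl|\langle u^{(n)}_i, v_j\rangle\bigr|^2,
\end{align*}
where I used $\|u^{(n)}_i\|=1$ and $\|P_V(u^{(n)}_i)\|^2 = \sum_j |\langle u^{(n)}_i,v_j\rangle|^2$. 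Thus the real sequence $\|u^{(n)}_i - P_V(u^{(n)}_i)\|$ tends to $0$ (in the $\mathcal{I}$-sense) if and only if $\sum_{j=1}^{k}|\langle u^{(n)}_i,v_j\rangle|^2$ tends to $1$ (in the $\mathcal{I}$-sense), for each fixed $i$. This uses only that $\mathcal{I}$-convergence of a nonnegative real sequence to zero is equivalent to $\mathcal{I}$-convergence of its square to zero, together with the elementary fact that $\mathcal{I}$-$\lim a_n = a$ is equivalent to $\mathcal{I}$-$\lim (1-a_n) = 1-a$.

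The proof I would therefore write is a single sentence: combine Theorem~3.1, which gives (i)~$\Leftrightarrow$~(ii), with Theorem~3.2, which gives (ii)~$\Leftrightarrow$~(iii), to conclude the equivalence of all three statements.

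There is no real obstacle; the only subtlety worth flagging is making sure the passage from $\|u^{(n)}_i - P_V(u^{(n)}_i)\|$ to its square (and back) is legitimate in the $\mathcal{I}$-convergence framework, but this follows immediately from the definition because the sets $\{n:\|u^{(n)}_i - P_V(u^{(n)}_i)\|\ge \epsilon\}$ and $\{n:\|u^{(n)}_i - P_V(u^{(n)}_i)\|^2 \ge \epsilon^2\}$ coincide for any $\epsilon>0$.
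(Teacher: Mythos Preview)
Your proposal is correct and matches the paper's own proof, which simply states that the result follows directly from Theorem~3.1 and Theorem~3.2. Your additional unpacking of the omitted ``simple calculations'' behind Theorem~3.2 via the identity $\|u^{(n)}_i - P_V(u^{(n)}_i)\|^2 = 1 - \sum_{j=1}^{k}|\langle u^{(n)}_i,v_j\rangle|^2$ is accurate and a helpful elaboration, but not required for the corollary itself.
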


\begin{proof}
The proof directly follows from Theorem 3.1 and Theorem 3.2.
\end{proof}

\begin{note}
Let $U_{n}~~(n \in\mathbb{N})$ and $V$ be $k$-dimensional subspaces of an inner product space $\mathcal X$. For each $i= 1,2,...,k$ we have
\begin{align*}
\left\|P_{V}(u^{(n)}_{i})\right\|^{2} &= \left\langle P_{V}(u^{(n)}_{i}),P_{V}(u^{(n)}_{i}) \right\rangle = \sum^{k}_{j=1}\left|<u^{(n)}_{i},v_{j}> \right|^{2}.
\end{align*}
\end{note}

\begin{thm}
Let $U_{n}~~(n \in\mathbb{N})$ and $V$ be $k$-dimensional subspaces of an inner product space $\mathcal X$. Then
$\mathcal{I}-\displaystyle{\lim_{n\rightarrow \infty}}\sum^{k}_{j=1}\left|<u^{(n)}_{i},v_{j}> \right|^{2}=1$
if and only if $~~\mathcal{I}-\displaystyle{\lim_{n\rightarrow \infty}} \left\|P_{V}(u^{(n)}_{i})\right\|= 1$ for all $i=1,2,...,k.$
\end{thm}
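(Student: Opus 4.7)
The plan is to reduce the statement to an elementary two-sided inequality governing $\mathcal{I}$-limits of a nonnegative bounded sequence and its square. By the Note preceding the theorem, for every $n \in \mathbb{N}$ and every $i=1,2,\ldots,k$ we have
\begin{align*}
\sum_{j=1}^{k}\left|\langle u_i^{(n)}, v_j\rangle\right|^{2} \;=\; \left\|P_V(u_i^{(n)})\right\|^{2}.
\end{align*}
Fix $i$ and set $a_n = \|P_V(u_i^{(n)})\|$. Since $P_V$ is a norm-contraction and $\|u_i^{(n)}\|=1$, one has $0 \leq a_n \leq 1$. The theorem is thus reduced to the claim that, for each fixed $i$, $\mathcal{I}\text{-}\lim_{n\to\infty} a_n^{2} = 1$ if and only if $\mathcal{I}\text{-}\lim_{n\to\infty} a_n = 1$.

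Next I would exploit the factorization $|a_n^{2} - 1| = |a_n - 1|\cdot (a_n + 1)$ together with the bounds $1 \leq a_n + 1 \leq 2$ to obtain
\begin{align*}
|a_n - 1| \;\leq\; |a_n^{2} - 1| \;\leq\; 2\,|a_n - 1|.
\end{align*}
For the forward direction, given $\epsilon > 0$, the left inequality yields
\begin{align*}
\{n \in \mathbb{N} : |a_n - 1| \geq \epsilon\} \;\subseteq\; \{n \in \mathbb{N} : |a_n^{2} - 1| \geq \epsilon\},
\end{align*}
so the left-hand set lies in $\mathcal{I}$ whenever the right-hand one does. For the converse, the right inequality gives
\begin{align*}
\{n \in \mathbb{N} : |a_n^{2} - 1| \geq \epsilon\} \;\subseteq\; \{n \in \mathbb{N} : |a_n - 1| \geq \epsilon/2\},
\end{align*}
and the same hereditary property of the ideal closes the argument. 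Running this through for each $i = 1,2,\ldots,k$ finishes both implications.

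There is no serious obstacle: everything rests on the identity supplied by the Note and on the contractivity of the orthogonal projection. The only point deserving the slightest care is recalling that $a_n \in [0,1]$, since it is precisely this that bounds $a_n + 1$ between $1$ and $2$ and converts the algebraic factorization into the two-sided estimate powering the proof.
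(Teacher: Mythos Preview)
Your proof is correct and follows essentially the same route as the paper's: both rest on the identity $\|P_V(u_i^{(n)})\|^{2} = \sum_{j}|\langle u_i^{(n)}, v_j\rangle|^{2}$ from the Note and the factorization $|a_n^{2}-1| = |a_n-1|\,(a_n+1)$. Your converse is a touch cleaner, since the a priori bound $a_n \le 1$ (contractivity of $P_V$) gives $a_n+1 \le 2$ uniformly, whereas the paper instead deduces $a_n+1 < 3$ from $|a_n-1| < \epsilon/3$ and then handles $\epsilon > 1$ as a separate case.
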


\begin{proof}
Fix $i\in \{1,2,...,k\}$. Let $\mathcal{I}-\displaystyle{\lim_{n\rightarrow \infty}}\sum^{k}_{j=1}\left|<u^{(n)}_{i},v_{j}> \right|^{2}=1.$ Let
$\epsilon > 0$ be given. Then there exists $\mathcal{A}(\epsilon)\in
\mathcal{F}(\mathcal{I})$ such that
\begin{align*}
& \left|\sum^{k}_{j=1}\left|<u^{(n)}_{i},v_{j}> \right|^{2} - 1\right| < \epsilon, ~~~ \forall ~~ n\in \mathcal{A}(\epsilon)
\\\Rightarrow & \left|\left\|P_{V}(u^{(n)}_{i})\right\|^{2}- 1\right| < \epsilon, ~~~ \forall ~~ n\in \mathcal{A}(\epsilon)
\\\Rightarrow & \left|\left\|P_{V}(u^{(n)}_{i})\right\|- 1\right| < \frac{\epsilon}{1+ \left\|P_{V}(u^{(n)}_{i})\right\| }
< \epsilon, ~~~  \forall ~~  n\in \mathcal{A}(\epsilon).
\end{align*}
Then, $ \left|||P_{V}(u^{(n)}_{i})||- 1\right| < \epsilon, \forall n\in \mathcal{A}(\epsilon).$ This gives $\mathcal{A}(\epsilon)
\subset \left\{ n\in \mathbb{N}: \left| ||P_{V}(u^{(n)}_{i}) ||- 1\right| < \epsilon \right\} .$ Consequently,
$ \left\{ n\in \mathbb{N}: \left| ||P_{V}(u^{(n)}_{i})||- 1\right| < \epsilon \right\} \in
\mathcal{F}(\mathcal{I}) $ and so $\mathcal{I}-\displaystyle{\lim_{n\rightarrow \infty}}
\left\|P_{V}(u^{(n)}_{i})\right\|= 1$.

Conversely, let $\mathcal{I}-\displaystyle{\lim_{n\rightarrow \infty}}
\left\|P_{V}(u^{(n)}_{i})\right\|= 1$. Let $0 < \epsilon \leq 1 $. Then there exists $\mathcal{B}(\epsilon)\in \mathcal{F}(\mathcal{I})$ such that
\begin{align*}
& \left| ||P_{V}(u^{(n)}_{i}) ||- 1\right| < \frac{\epsilon}{3}, ~~~   \forall~~ n\in \mathcal{B(\epsilon)}
\\ \mbox{and} & \left\|P_{V}(u^{(n)}_{i})\right\|+ 1 < 3, ~~~ \forall~~ n\in \mathcal{B(\epsilon)}.
\end{align*}
Then, for each $n\in \mathcal{B}(\epsilon)$
\begin{align*}
 &\left| ||P_{V}(u^{(n)}_{i})||^{2}- 1\right|  = \left(||P_{V}(u^{(n)}_{i}) ||+ 1\right) \left| ||P_{V}(u^{(n)}_{i})||- 1\right|
 < 3 \frac{\epsilon}{3} = \epsilon
\\\Rightarrow & \left|\sum^{k}_{j=1}\left|<u^{(n)}_{i},v_{j}> \right|^{2}- 1\right| < \epsilon.
\end{align*}
Then, $\mathcal{B}(\epsilon)\subset \left\{n\in \mathbb{N}:
\left|\sum^{k}_{j=1}\left|<u^{(n)}_{i},v_{j}> \right|^{2}- 1\right|
< \epsilon \right\}.$ Consequently,
\begin{equation}
\left\{n\in \mathbb{N}:\left|\sum^{k}_{j=1}\left|<u^{(n)}_{i},v_{j}> \right|^{2}- 1\right|< \epsilon \right\}\in
\mathcal{F(\mathcal{I})}.
\end{equation}
Now let $\epsilon > 1.$ Then,
\begin{align*}
\left\{n\in \mathbb{N}:\left|\sum^{k}_{j=1}\left|<u^{(n)}_{i},v_{j}> \right|^{2}- 1\right|
< 1 \right\}\subset \left\{n\in \mathbb{N}:\left|\sum^{k}_{j=1}\left|<u^{(n)}_{i},v_{j}> \right|^{2}- 1\right| < \epsilon \right\}.
\end{align*}
Then using (1) we have
$\left\{n\in \mathbb{N}:\left|\sum^{k}_{j=1}\left|<u^{(n)}_{i},v_{j}> \right|^{2}- 1\right| < \epsilon \right\} \in \mathcal{F}(\mathcal{I})$.
Thus for any $\epsilon > 0, \left\{n\in \mathbb{N}:\left|\sum^{k}_{j=1}\left|<u^{(n)}_{i},v_{j}> \right|^{2}- 1\right|
< \epsilon \right\} \in \mathcal{F}(\mathcal{I})$. Hence, $\mathcal{I}-\displaystyle{\lim_{n\rightarrow \infty}} \sum^{k}_{j=1}\left|<u^{(n)}_{i}, v_{j}>\right|^{2}= 1$.
\end{proof}

\begin{cor}
Let $U_{n}~~(n \in\mathbb{N})$ and $V$ be $k$-dimensional subspaces of an inner product space $\mathcal X$. Then the following statements are equivalent:

(i) $\mathcal{I}-\displaystyle{\lim_{n\rightarrow \infty}}
U_{n}= V $

(ii) $\mathcal{I}-\displaystyle{\lim_{n\rightarrow \infty}}
\left\|u^{(n)}_{i}- P_{V}(u^{(n)}_{i})\right\|= 0, \forall
i = 1, 2, ..., k $

(iii) $\mathcal{I}-\displaystyle{\lim_{n\rightarrow \infty}}
\sum^{k}_{j=1}\left|<u^{(n)}_{i}, v_{j}>\right|^{2}= 1,
\forall i = 1, 2, ..., k $

(iv) $\mathcal{I}-\displaystyle{\lim_{n\rightarrow \infty}}
\left\| P_{V}(u^{(n)}_{i})\right\|=1, \forall i = 1, 2, ..., k $.
\end{cor}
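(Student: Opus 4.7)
The plan is to observe that this corollary is purely a bookkeeping consequence of the three preceding results (Theorem 3.1, Theorem 3.2, Theorem 3.3), each of which already supplies one of the three missing equivalences. So rather than re-deriving anything, I would set up a short cyclic or bi-implicational chain connecting (i)--(iv) and appeal to those theorems at each link.

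Concretely, I would first invoke Theorem 3.1 to get (i) $\Leftrightarrow$ (ii), since that theorem states exactly $\mathcal{I}\text{-}\lim U_n = V$ iff $\mathcal{I}\text{-}\lim \|u^{(n)}_i - P_V(u^{(n)}_i)\| = 0$ for every $i = 1, 2, \ldots, k$. Then Theorem 3.2 directly yields (ii) $\Leftrightarrow$ (iii), since it asserts that $\mathcal{I}\text{-}\lim \|u^{(n)}_i - P_V(u^{(n)}_i)\| = 0$ iff $\mathcal{I}\text{-}\lim \sum_{j=1}^{k} |\langle u^{(n)}_i, v_j\rangle|^2 = 1$, uniformly in the index $i$. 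Finally, Theorem 3.3 supplies (iii) $\Leftrightarrow$ (iv), completing the loop.

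Putting these three equivalences together gives (i) $\Leftrightarrow$ (ii) $\Leftrightarrow$ (iii) $\Leftrightarrow$ (iv), which is the claim. There is no genuine obstacle here: the identity in Note 3.1, namely $\|P_V(u^{(n)}_i)\|^2 = \sum_{j=1}^{k} |\langle u^{(n)}_i, v_j\rangle|^2$, already plays the structural role inside Theorem 3.2 and Theorem 3.3, so all the work has been done upstream. The only small care needed is to note that each of the three cited equivalences is asserted \emph{for every} $i \in \{1, 2, \ldots, k\}$, and that quantifier is preserved under the intersection of finitely many filter-sets, so the joint $\mathcal{I}$-convergence statements compose without loss. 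I would close the proof with a one-line remark that the equivalence therefore holds.
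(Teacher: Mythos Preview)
Your proposal is correct and matches the paper's own proof, which simply cites Theorem 3.1, Theorem 3.2, and Theorem 3.4 and concludes. The only slip is a labeling one: what you call ``Theorem 3.3'' for the equivalence (iii) $\Leftrightarrow$ (iv) is actually Theorem 3.4 in the paper's numbering (the item numbered 3.3 is the earlier corollary, not a theorem).
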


\begin{proof}
The proof directly follows from Theorem 3.1, Theorem 3.2 and Theorem 3.4.
\end{proof}

Following Gunawan et al. \cite{Gu}, we recall the notion of the standard $n$-norm of $n$-vectors in an inner product
space $\mathcal{X}$ over $\mathcal F ( = \mathbb{R} ~\mbox{or}~ \mathbb{C})$ of dimension $k$ or higher.

The standard $n$-inner product of $n$-vectors $x_{1},x_{2},\ldots,x_{n}$ is defined as
\begin{align*}
G(x_{1},x_{2},\ldots,x_{n})= \left|
\begin{array}{l l l l}
<x_{1},x_{1}> & <x_{1},x_{2}> & \ldots & <x_{1},x_{n}>\\
<x_{2},x_{1}> & <x_{2},x_{2}> & \ldots & <x_{2},x_{n}>\\
\vdots & \vdots && \vdots \\
<x_{n},x_{1}> & <x_{n},x_{2}> & \ldots & <x_{n},x_{n}>
\end{array}
\right|,
\end{align*}
here $G(x_{1},x_{2},\ldots,x_{n})$ is called the Gramian of the vectors $ x_{1},x_{2},\ldots,x_{n} $.

Clearly the vectors $x_{1},x_{2},\ldots,x_{n}$ in $\mathcal{X}$
are linearly dependent if and only if the Gramian of the $n$-vectors
i.e. $G(x_{1},x_{2},\ldots,x_{n})$ vanishes.

The standard $n$-norm is defined as
\begin{align*}
\left\|x_{1},x_{2},\ldots,x_{n}\right\|= \sqrt{G(x_{1},x_{2},\ldots,x_{n})}.
\end{align*}

\begin{thm}
Let $U_{n}~~(n \in\mathbb{N})$ and $V$ be $k$-dimensional subspaces of an inner product space $\mathcal X$. If
$\mathcal{I}-\displaystyle{\lim_{n \rightarrow \infty}}U_{n}= V, $ then
$\mathcal{I}-\displaystyle{\lim_{n\rightarrow \infty}} \left\|u^{(n)}_{i}, P_{V}(u^{(n)}_{i})\right\|
= 0, \forall i = 1, 2, \ldots, k $.
\end{thm}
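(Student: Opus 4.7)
The plan is to unwind the definition of the standard $2$-norm on pairs of vectors, reduce the expression $\|u_i^{(n)}, P_V(u_i^{(n)})\|$ to something depending only on $\|P_V(u_i^{(n)})\|$, and then invoke Corollary 3.5(iv).

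First I would compute the Gramian explicitly. For fixed $i\in\{1,\ldots,k\}$, setting $x=u_i^{(n)}$ and $y=P_V(u_i^{(n)})$, the Gramian determinant gives
\begin{align*}
G(u_i^{(n)},P_V(u_i^{(n)}))=\|u_i^{(n)}\|^2\,\|P_V(u_i^{(n)})\|^2-\bigl|\langle u_i^{(n)},P_V(u_i^{(n)})\rangle\bigr|^2.
\end{align*}
Since $\{u_1^{(n)},\ldots,u_k^{(n)}\}$ is orthonormal we have $\|u_i^{(n)}\|=1$. Moreover, writing $u_i^{(n)}=P_V(u_i^{(n)})+(u_i^{(n)}-P_V(u_i^{(n)}))$ and using that $u_i^{(n)}-P_V(u_i^{(n)})\perp V$, I obtain $\langle u_i^{(n)},P_V(u_i^{(n)})\rangle=\|P_V(u_i^{(n)})\|^2$. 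Substituting,
\begin{align*}
\|u_i^{(n)},P_V(u_i^{(n)})\|^2=\|P_V(u_i^{(n)})\|^2-\|P_V(u_i^{(n)})\|^4=\|P_V(u_i^{(n)})\|^2\bigl(1-\|P_V(u_i^{(n)})\|^2\bigr).
\end{align*}

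Next I would invoke Corollary 3.5: the hypothesis $\mathcal{I}\text{-}\lim U_n=V$ implies $\mathcal{I}\text{-}\lim\|P_V(u_i^{(n)})\|=1$ for each $i$. By standard properties of $\mathcal{I}$-convergence (product and difference of $\mathcal{I}$-convergent sequences), it follows that $\mathcal{I}\text{-}\lim\|P_V(u_i^{(n)})\|^2=1$ and hence $\mathcal{I}\text{-}\lim\bigl(1-\|P_V(u_i^{(n)})\|^2\bigr)=0$. Since the first factor is bounded (by $1$, noting $\|P_V(u_i^{(n)})\|\le\|u_i^{(n)}\|=1$), the product $\mathcal{I}$-converges to $0$, so $\mathcal{I}\text{-}\lim\|u_i^{(n)},P_V(u_i^{(n)})\|^2=0$ and finally $\mathcal{I}\text{-}\lim\|u_i^{(n)},P_V(u_i^{(n)})\|=0$.

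The only mildly subtle point is the product-and-difference step for $\mathcal{I}$-convergent sequences; if one prefers to avoid citing it, the same conclusion can be drawn directly via an $\varepsilon$-argument: given $\varepsilon>0$, choose $B(\varepsilon)\in\mathcal{F}(\mathcal{I})$ so that $\bigl|\|P_V(u_i^{(n)})\|-1\bigr|<\varepsilon^2/2$ for $n\in B(\varepsilon)$, bound $\|P_V(u_i^{(n)})\|^2\le 1$, and check that $\|u_i^{(n)},P_V(u_i^{(n)})\|<\varepsilon$ on $B(\varepsilon)$, which yields the required containment in $\mathcal{F}(\mathcal{I})$. I expect this final bookkeeping to be the only non-mechanical step; the algebraic reduction of the $2$-norm is routine once one notices $\langle u_i^{(n)},P_V(u_i^{(n)})\rangle=\|P_V(u_i^{(n)})\|^2$.
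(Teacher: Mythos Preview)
Your argument is correct: the Gramian reduction $\|u_i^{(n)},P_V(u_i^{(n)})\|^2=\|P_V(u_i^{(n)})\|^2\bigl(1-\|P_V(u_i^{(n)})\|^2\bigr)$ together with Corollary~3.5(iv) and the bound $\|P_V(u_i^{(n)})\|\le 1$ gives the result, and your $\varepsilon$-bookkeeping is sound. The paper itself omits the proof (``needs simple calculations''), so there is no detailed argument to compare against; your computation is precisely the routine calculation the authors have in mind, carried out with the tools they have set up.
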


\begin{proof}
The proof needs simple calculations, so is omitted.
\end{proof}
The Converse of the above theorem is not true. To show this we consider the following example.

\begin{Example}
Let $\mathcal{I}$ be a non-trivial admissible ideal of $\mathbb{N}$. Consider the
real vector space $\mathcal{X}= \mathbb{R}^{2}$ with standard inner product. Let $\{e_{1}, e_{2}\}$ be the canonical basis of $\mathbb{R}^{2}$.

We consider the sequence $\{ U_{n}\}_{n\in \mathbb{N}}$ of one-dimensional subspaces of $\mathcal{X}$, defined as follows:
\begin{align*}
& U_{n}= span\{u^{(n)}_{1}\},~~ \mbox{where}~~ u^{(n)}_{1}= e_{1}, ~~~ n \in \mathbb{N}
\\ \mbox{and}~~ & V= span \{e_{2}\}.
\end{align*}
Then $\mathcal{I}-\displaystyle{\lim_{n\rightarrow \infty}}\left\| u^{(n)}_{1}, P_{V}(u^{(n)}_{1})\right\|= 0 $ but
the sequence $\{U_{n}\}_{n\in \mathbb{N}}$ is not $\mathcal{I}-$convergent to the
subspace $V$ of $\mathcal X$.
\end{Example}

\begin{thm}
Let $U_{n}~~(n \in\mathbb{N})$ and $V$ be $k$-dimensional subspaces of an inner product space $\mathcal X$. Then
$\mathcal{I}-\displaystyle{\lim_{n\rightarrow \infty}}\left\|u^{(n)}_{i},v_{1}, v_{2},\ldots, v_{k} \right\|= 0$ if
and only if $\mathcal{I}-\displaystyle{\lim_{n\rightarrow \infty}}\sum^{k}_{j=1}\left|<u^{(n)}_{i}, v_{j} >\right|^{2}= 1$ for
all $i=1,2,...,k$.
\end{thm}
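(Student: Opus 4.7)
The plan is to reduce the statement about the standard $n$-norm to a concrete algebraic identity relating the Gramian to the projection coefficients, and then to apply the standard equivalence between the $\mathcal{I}$-convergence of a nonnegative sequence and that of its square (a device already used in the proof of Theorem 3.4).

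First, I would compute $\|u^{(n)}_{i}, v_{1}, v_{2}, \ldots, v_{k}\|^{2}=G(u^{(n)}_{i}, v_{1}, \ldots, v_{k})$ explicitly. Since $\{v_{1}, \ldots, v_{k}\}$ is orthonormal and $\|u^{(n)}_{i}\|=1$, the Gram matrix has the block form
\begin{align*}
\begin{pmatrix} 1 & \langle u^{(n)}_{i},v_{1}\rangle & \cdots & \langle u^{(n)}_{i},v_{k}\rangle \\ \overline{\langle u^{(n)}_{i},v_{1}\rangle} & 1 & & 0 \\ \vdots & & \ddots & \\ \overline{\langle u^{(n)}_{i},v_{k}\rangle} & 0 & & 1 \end{pmatrix}.
\end{align*}
A Schur complement expansion (with $D=I_{k}$) or a direct cofactor expansion along the first row gives the key identity
\begin{align*}
\|u^{(n)}_{i}, v_{1}, \ldots, v_{k}\|^{2} = 1 - \sum_{j=1}^{k}\left|\langle u^{(n)}_{i},v_{j}\rangle\right|^{2}.
\end{align*}
This identity is the crux of the proof; I expect this Gram-determinant computation, together with careful handling of complex conjugates, to be the main (and essentially only) technical obstacle.

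With this identity in hand, the theorem is immediate. Fix $i\in\{1,\ldots,k\}$. For the forward direction, if $\mathcal{I}\text{-}\lim_{n\to\infty}\|u^{(n)}_{i}, v_{1},\ldots,v_{k}\|=0$, then since $\|u^{(n)}_{i}, v_{1},\ldots,v_{k}\|\geq 0$ and for any $\epsilon>0$ the set $\{n:\|u^{(n)}_{i},v_{1},\ldots,v_{k}\|^{2}\geq\epsilon\}=\{n:\|u^{(n)}_{i},v_{1},\ldots,v_{k}\|\geq\sqrt{\epsilon}\}\in\mathcal{I}$, we get $\mathcal{I}\text{-}\lim\|u^{(n)}_{i},v_{1},\ldots,v_{k}\|^{2}=0$. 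Substituting the identity, $\mathcal{I}\text{-}\lim\bigl(1-\sum_{j=1}^{k}|\langle u^{(n)}_{i},v_{j}\rangle|^{2}\bigr)=0$, hence $\mathcal{I}\text{-}\lim\sum_{j=1}^{k}|\langle u^{(n)}_{i},v_{j}\rangle|^{2}=1$. The converse runs through the same chain in reverse: the assumption gives $\mathcal{I}\text{-}\lim\|u^{(n)}_{i},v_{1},\ldots,v_{k}\|^{2}=0$ by the identity, and for any $\epsilon>0$ the inclusion $\{n:\|u^{(n)}_{i},v_{1},\ldots,v_{k}\|\geq\epsilon\}=\{n:\|u^{(n)}_{i},v_{1},\ldots,v_{k}\|^{2}\geq\epsilon^{2}\}\in\mathcal{I}$ yields $\mathcal{I}\text{-}\lim\|u^{(n)}_{i},v_{1},\ldots,v_{k}\|=0$.

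Since the argument holds for each fixed $i\in\{1,2,\ldots,k\}$, both directions hold for all $i$ simultaneously, which completes the proof.
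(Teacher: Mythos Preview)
Your proposal is correct and follows essentially the same route as the paper: establish the identity $\|u^{(n)}_{i},v_{1},\ldots,v_{k}\|^{2}=1-\sum_{j=1}^{k}|\langle u^{(n)}_{i},v_{j}\rangle|^{2}$, then pass between a nonnegative sequence and its square via the $\sqrt{\epsilon}$/$\epsilon^{2}$ trick. The only cosmetic difference is that the paper justifies the Gramian identity by induction on $k$ rather than by a Schur complement, and works with the filter $\mathcal{F}(\mathcal{I})$ rather than the ideal $\mathcal{I}$ directly.
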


\begin{proof}
Fix $i\in \{1,2,...,k\}$. Then using mathematical induction on $k$, we have
\begin{align*}
\left\|u^{(n)}_{i},v_{1}, v_{2},\ldots, v_{k} \right\|^{2}= 1-
\sum^{k}_{j=1}\left|<u^{(n)}_{i}, v_{j} >\right|^{2}.
\end{align*}
First, let $\mathcal{I}-\displaystyle{\lim_{n\rightarrow \infty}}\left\|u^{(n)}_{i},v_{1}, v_{2},\ldots, v_{k} \right\|= 0$ and
let $\epsilon > 0$ be given. Then there exists $\mathcal{A(\epsilon)} \in \mathcal{F(I)}$ such that,
\begin{align*}
& \left\|u^{(n)}_{i},v_{1}, v_{2},\ldots, v_{k} \right\| < \sqrt{\epsilon}, ~~~ \forall ~~~ n\in \mathcal{A(\epsilon)} \\
\Rightarrow & \left\|u^{(n)}_{i},v_{1}, v_{2},\ldots, v_{k} \right\|^{2} < \epsilon, ~~~ \forall ~~~ n\in \mathcal{A(\epsilon)} \\
\Rightarrow & \left|\sum^{k}_{j=1}\left|<u^{(n)}_{i}, v_{j} >\right|^{2} - 1 \right| < \epsilon, ~~~ \forall ~~~ n\in \mathcal{A(\epsilon)}.
\end{align*}
Then we have,
\begin{align*}
& \mathcal{A(\epsilon)}\subset \left\{n\in \mathbb{N}: \left|\sum^{k}_{j=1}\left|<u^{(n)}_{i}, v_{j} >\right|^{2} - 1 \right|< \epsilon \right\} \\
\Rightarrow & \left\{n\in \mathbb{N}: \left|\sum^{k}_{j=1}\left|<u^{(n)}_{i}, v_{j} >\right|^{2} - 1 \right|< \epsilon \right\} \in \mathcal{F(I)}.
\end{align*}
Consequently, $\mathcal{I}-\displaystyle{\lim_{n\rightarrow \infty}}\sum^{k}_{j=1}\left|<u^{(n)}_{i}, v_{j} >\right|^{2}= 1.$

Conversely, let $\mathcal{I}-\displaystyle{\lim_{n\rightarrow \infty}}\sum^{k}_{j=1}\left|<u^{(n)}_{i}, v_{j} >\right|^{2}= 1.$ Let $\epsilon > 0$ be given.
Then there exists $\mathcal{B(\epsilon)} \in \mathcal{F(I)}$ such that,
\begin{align*}
& \left|\sum^{k}_{j=1}\left|<u^{(n)}_{i}, v_{j} >\right|^{2} - 1 \right| < \epsilon^{2}, ~~~~ \forall ~~~ n\in \mathcal{B(\epsilon)}\\
\Rightarrow & \left\|u^{(n)}_{i},v_{1}, v_{2},\ldots, v_{k} \right\|^{2} < \epsilon^{2}, ~~~~ \forall ~~~ n\in \mathcal{B(\epsilon)} \\
\Rightarrow & \left\|u^{(n)}_{i},v_{1}, v_{2},\ldots, v_{k} \right\| < \epsilon, ~~~~ \forall ~~~ n\in \mathcal{B(\epsilon)}.
\end{align*}
Then we have,
\begin{align*}
& \mathcal{B(\epsilon)}\subset \left\{n\in \mathbb{N}:\left\|u^{(n)}_{i},v_{1}, v_{2},\ldots, v_{k} \right\| < \epsilon \right\} \\
\Rightarrow & \left\{n\in \mathbb{N}:\left\|u^{(n)}_{i},v_{1}, v_{2},\ldots, v_{k} \right\| < \epsilon \right\}\in \mathcal{F(I)} \\
\Rightarrow & ~~ \mathcal{I}-\displaystyle{\lim_{n\rightarrow \infty}}\left\|u^{(n)}_{i},v_{1}, v_{2},\ldots, v_{k} \right\|= 0.
\end{align*}
\end{proof}

\begin{cor}
Let $U_{n}~~(n \in\mathbb{N})$ and $V$ be $k$-dimensional subspaces of an inner product space $\mathcal X$. Then the following statements are equivalent:

(i) $\mathcal{I}-\displaystyle{\lim_{n\rightarrow \infty}}U_{n}= V$

(ii) $\mathcal{I}-\displaystyle{\lim_{n\rightarrow \infty}}\left\|u^{(n)}_{i} - P_{V}(u^{(n)}_{i})\right\|= 0, \forall i = 1, 2, ..., k$

(iii) $\mathcal{I}-\displaystyle{\lim_{n\rightarrow \infty}} \sum^{k}_{j=1}\left|<u^{(n)}_{i}, v_{j}>\right|^{2}= 1, \forall i = 1, 2, ..., k$

(iv) $\mathcal{I}-\displaystyle{\lim_{n\rightarrow \infty}}\left\|P_{V}(u^{(n)}_{i})\right\|= 1, \forall 1 = 1, 2, ..., k $

(v) $\mathcal{I}-\displaystyle{\lim_{n\rightarrow \infty}}\left\|u^{(n)}_{i}, v_{1},v_{2},..., v_{k}\right\|=0, \forall i = 1, 2, ..., k.$
\end{cor}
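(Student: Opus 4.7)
The plan is to observe that all the nontrivial work has already been carried out in the earlier theorems of this section, so the proof reduces to assembling the implications into a single chain of equivalences. I would therefore not re-prove anything from scratch but simply invoke Theorems 3.1, 3.2, 3.4, and 3.6, and point out that their conclusions fit together without a gap.

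Concretely, I would carry out the steps in this order. First, apply Theorem 3.1 to obtain (i) $\Leftrightarrow$ (ii). Second, apply Theorem 3.2 to obtain (ii) $\Leftrightarrow$ (iii). Third, apply Theorem 3.4 to obtain (iii) $\Leftrightarrow$ (iv), which grafts condition (iv) onto the chain through the common hub (iii). Finally, apply Theorem 3.6 to obtain (iii) $\Leftrightarrow$ (v), attaching condition (v) to the same hub. Combining these four bi-implications yields
\begin{align*}
\text{(i)} \Longleftrightarrow \text{(ii)} \Longleftrightarrow \text{(iii)} \Longleftrightarrow \text{(iv)}, \qquad \text{(iii)} \Longleftrightarrow \text{(v)},
\end{align*}
so all five statements are mutually equivalent.

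There is essentially no obstacle here: the only thing to verify is that the conditions are quantified identically across the cited theorems (the running index $i$ ranges over $\{1,2,\dots,k\}$ in each, and the fixed orthonormal bases $\{u^{(n)}_1,\dots,u^{(n)}_k\}$ of $U_n$ and $\{v_1,\dots,v_k\}$ of $V$ are the same ones set up in Section 2). Since the statements (ii)--(v) are all universal over $i=1,\dots,k$ and Theorems 3.2, 3.4, 3.6 are proved pointwise in $i$, the universal quantifier passes through each equivalence without issue, and the corollary follows in one line.
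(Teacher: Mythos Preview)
Your approach matches the paper's own proof, which simply says the result follows directly from the earlier theorems. The only issue is a mis-citation: the equivalence (iii) $\Leftrightarrow$ (v) is Theorem~3.7, not Theorem~3.6. Theorem~3.6 is the one-way implication about the $2$-norm $\left\|u^{(n)}_{i}, P_{V}(u^{(n)}_{i})\right\|$, whose converse the paper explicitly shows to be false (Example~3.2), so invoking it here would not give you the biconditional you need. With the reference corrected to Theorem~3.7, your argument is identical to the paper's.
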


\begin{proof}
The proof directly follows from Theorem 3.1, Theorem 3.2, Theorem 3.4 and Theorem 3.7.
\end{proof}

\noindent\textbf{Acknowledgement:}
The second author is grateful to University Grants Comission, India for financial support under UGC-JRF scheme during the preparation of this paper.
\\
 %----------------------REFERENCES-------------------------------------

\end{document}